\definecolor{mylinkcolor}{rgb}{0.5,0.0,0.0}
\definecolor{myurlcolor}{rgb}{0.0,0.0,0.75}
\definecolor{lime}{HTML}{A6CE39}
\DeclareRobustCommand{\orcidicon}{%
	\begin{tikzpicture}
		\draw[lime, fill=lime] (0,0) 
		circle [radius=0.16] 
		node[white] {{\fontfamily{qag}\selectfont \tiny ID}};
		\draw[white, fill=white] (-0.0625,0.095) 
		circle [radius=0.007];
	\end{tikzpicture}
	\hspace{-2mm}
}
\DeclareMathOperator{\co}{\operatorname{codim}}
\newcommand{\QED}{\hspace{\stretch{1}} $\blacksquare$}
\newcommand{\CC}{\mathbb{C}}
\newcommand{\NN}{\mathbb{N}}
\newcommand{\RR}{\mathbb{R}}
\newcommand{\ZZ}{\mathbb{Z}}
\newcommand{\la}{\langle}
\newcommand{\ra}{\rangle}
\theoremstyle{plain}
\newtheorem{thm}{Theorem}
\newtheorem{conj}[thm]{Conjecture}
\newtheorem{prop}[thm]{Proposition}
\theoremstyle{definition}
\theoremstyle{remark}
\newtheorem{rem}[thm]{Remark}
\newtheorem*{ex}{Example}
\numberwithin{equation}{section}
\numberwithin{thm}{section}
\numberwithin{equation}{section}
\numberwithin{thm}{section}
\newcommand{\orcidParth}{\href{https://orcid.org/0009-0005-3186-5984}{\orcidicon}}
\newcommand{\orcid}[1]{\href{https://orcid.org/#1}{\textcolor[HTML]{A6CE39}{\aiOrcid}}}
\begin{document}
	\title{Counting ideals in numerical semigroups}
	\author{Parth Chavan \orcidParth}
	\thanks{The author was supported by The 2022 Spirit of Ramanujan Fellowship and The 2022 Mehta Fellowship}
	\email{spc2005@outlook.com}
	\date{\today}
	\keywords{Numerical semigroup, Ideal, Catalan number}
	\subjclass{20M14, 05A19.}
	\maketitle
	\begin{abstract} If $S$ is a numerical semigroup, let $m(S,k)$ denote the number of ideals of $S$ with codimension $k$ and let $n(S,k)$ denote the number of ideals of $S$ with conductor $k$. We compute the generating function of the sequence $m(S,k)$ for all numerical semigroups of embedding dimension $2$ and for $S = \langle 3,n+2,2n+1\rangle$. We also prove that the sequence $n(S,k)$ becomes stationary after a certain term and compute the stationary terms for numerical semigroups of the form $\langle a,a+1 \rangle$.
 %a set $I \subset S$ is called an ideal if $I+S \subseteq S$. Let $m(S,k)$ and $n(S,k)$ denote the number of numerical semigroups $S' \subseteq S$ such that $F(S')=k$ and $g(S')=g(S)+k$ respectively. In this work, we prove that the sequence $m(S,k)$ eventually becomes stationary for all numerical semigroups $S \neq \NN$ and explicitly compute the stationary terms for numerical semigroups of the form $\langle a,a+1 \rangle$. We also explicitly compute the numbers $n(S,k)$ for numerical semigroups with embedding dimension $2$ and certain families of numerical semigroups with embedding dimension $3$ thereby proving a recent conjecture of Moreno-Frias and Rosales for these numerical semigroups. 
	\end{abstract}
	
	\section{Introduction}
Let $\NN$ denote the set of non-negative integers. A numerical semigroup is a subset $S$ of $\NN$ that is closed under addition, $0 \in S$, and $\NN \setminus S$ has finitely many elements. For any $a_1,\ldots,a_k \in \NN$, let $\langle a_1,\ldots,a_k \rangle = \{a_1n_1+\ldots+a_kn_k \mid n_1,\ldots,n_k \in \NN\}$. The set $\langle a_1,\ldots,a_k \rangle$ is a numerical semigroup if and only if $\gcd{a_1,\ldots,a_k} =1$. Moreover, any numerical semigroup admits a minimal system of generators $\{a_1<\ldots<a_p\}$ such that $\langle a_1,\ldots,a_p \rangle = S$. The number $p$ is called the \emph{embedding dimension} of $S$ and is denoted by $e(S)$.

Given a set $P \subseteq \NN$ and an integer $n \in \NN$, let $n+P = \{n+p \mid p \in P\}$, $\NN_{\geqslant n} = \{i \in \NN \mid i \geqslant n\}$ and $[n] = \{0,1,\ldots,n\}$. An \emph{ideal} is a subset of a numerical semigroup that has an additive structure over it: a non-empty set $I \subseteq S$ is called an \emph{ideal} of $S$ if $I+S = I$. For a set $S$, we represent its cardinality by $|S|$. For an ideal $I \subseteq S$, the \emph{codimension} and \emph{conductor}  of $I$ are defined as $\operatorname{codim}(I)= |S \setminus I|$ and  $c(I)=\min\{n \in \NN \mid \NN_{\geq n} \subseteq I\}$ respectively. Both of these are equality reversing invariants. In other words, ideals $I_1 \subseteq I_2$ satisfy $\operatorname{codim}(I_2) \leq \operatorname{codim}(I_1)$ and $c(I_2) \leq c(I_1)$. Let $I(S)$ denote all ideals of $S$. An equivalent form of a recent conjecture posed by Moreno-Frias and Rosales is stated below

\begin{conj}\label{conjros}
		Given a numerical semigroup $S \neq \NN$, let $m(S,k) = |\{I \in I(S) \mid \co(I)=k \}|$. Then there exists a constant $m_S$, such that $i(S,k) \leqslant i(S,k+1)$ for all $0<k<m_S$ and $i(S,m_S) = i(S,m_S+k)$ for all $k \in \NN$.
	\end{conj} 
	They prove this conjecture for ordinary numerical semigroups, $\{0\}\cup \NN_{\geq k}$, and numerical semigroups of the form $\langle 2,2k+1 \rangle$ \cite{RosaMor,Rosa2}. The aim of this article is to prove this conjecture for all numerical semigroups $S$ with $e(S)=2$ or $S= \langle 3,n+2,2n+1 \rangle$ for some $n \in \NN$.

  Another question of interest is to determine whether the sequence $n(S,k) = |\{I \in I(S) \mid c(I)=k \}|$ satisfies similar properties. We answer this question affirmatively by proving that $n(S,k)$ eventually becomes stationary and also explicitly compute these stationary terms for numerical semigroups of the form $\langle a,a+1 \rangle$.

\section{Preliminaries}

This section is devoted to proving that the sequences $m(S,k), n(S,k)$ become stationary. In what follows, let $c(S)-1 = F(S)$. Moreno-Frias and Rosales, via the following result, prove that the sequence $m(S,k)$ becomes stationary after $k$ exceeds $F(S)$.
	
	\begin{thm}[\cite{RosaMor}]\label{thmros}
		The following inequalities hold
		\begin{enumerate}
			\item $m(S,F(S))=m(S,F(S)+k)$ for all $k \in \NN$
			\item $m(S,F(S)-1) < m(S,F(S))$.
		\end{enumerate}
	\end{thm}

 We now prove $n(S,k)$ becomes stationary for any numerical semigroup $S$.
\begin{prop}\label{ms} Let $\mathcal{A}(S,k) = \{ I \in I(S) \mid c(I) = k\}$. Then for all $k \in \NN_{k \geqslant 2}$, the map $$F: \mathcal{A}\left(S, 2F(S)+2\right) \to \mathcal{A}\left(S, 2F(S)+k+2\right), $$ defined by $F(I)=I+k$ is a bijection.
	\end{prop}
	\begin{proof}
		We first prove that the map $F$ is well-defined. For any ideal $I \in \mathcal{A}(S, 2F(S)+k)$, let $m(I) = \min \{i \in I\}$. If $m(I)<F(S)+k-1$, then $c(I) \leqslant c(m(I)+S)=m(I)+ F(S)+1$ since $m(I)+S \subseteq I$. This contradicts our assumption since it implies $c(I) < 2F(S)+k$. Thus any ideal $I \in \mathcal{A}(S, 2F(S)+2)$ satisfies $I \subset \NN_{\geqslant F(S)+1}$. For any two integers $i \in I , s \in S$, we have the inclusion $i+s+k \in I+k \subset \NN_{F(S)+k+1}$ since $i+s \in I$. Thus the set $I+k$ is an ideal. Since $I \subset \NN_{\geqslant F(S)+1}$, it's clear that $c(I+k)=c(I)+k$. Thus $F$ is well defined. 
		
		Now consider the map $G: \mathcal{A}(S, 2F(S)+k+2) \to \mathcal{A}(S,2F(s)+2)$ defined by $G(I)=I-k$. Using a similar method as before it can be proved that the map $G$ is well-defined as well. The maps $F,G$ are clearly injective which thereby establishes a bijection.
	\end{proof}
	Let $n_S$ be the smallest integer such that $n(S,n_S)=n(S,m)$ for all $m \in \NN_{\geq n_S+1}$.  Proposition \ref{ms} implies that $n_S \leqslant 2F(S)+2$.
		\section{Counting Ideals with Respect to Codimension}
	
	\subsection{Ideal generating function} The \emph{ideal generating function} corresponding to the numerical semigroup $S$ is defined as
	\begin{equation}\label{igf}
		I\left(S;q\right) = \sum_{I\, \in \, I\left(S\right)} q^{\operatorname{codim}(I)}.
	\end{equation}
	If $S$ has set of generators $\{a_1,\ldots,a_r\}$, we occasionally use the notation $I(a_1,\ldots,a_r;q)$ for $I\left(S;q\right)$. Let $R=k[t^{a_1},\ldots,t^{a_r}]$ be the \emph{semigroup ring} over a field $k$ associated with the semigroup $S=\la a_1,\ldots,a_r \ra$ and let $J\left(a_1,\ldots,a_r\right)$ be the set of all monomial ideals in this ring. The ideal generating function can alternatively be defined as 
	\begin{equation}\label{firsteqn}
		I\left(a_1,\ldots,a_r,q\right)=\sum_{J \,\in\, J\left(a_1,\ldots,a_r\right)} q^{\dim_{k}(R/J)}.
	\end{equation}
	For a rational function $\frac{p(x)}{q(x)} \in \RR(x)$, we define its degree as $\deg\left(p(x)\right)-\deg\left(q(x)\right)$.
	It turns out that the ideal generating function is a rational function as described next. 
	\begin{prop}\label{0}
		The ideal generating function of a numerical semigroup $S$ can be written as 
		\[I\left(S;q\right) = \frac{f(q)}{1-q},\]
		where $f(q) \in \ZZ[q]$. Moreover, we have \begin{equation}\label{FRO}
			 \deg\left(I\left(S;q\right)\right) = \deg\left(f(q)\right)-1=F(S).	\nonumber\end{equation}
	\end{prop}
	
	\begin{proof}
		
		We use Theorem \ref{thmros} to rewrite the ideal generating function as 
		\begin{equation}\label{igf}
			I\left(a_1,\ldots,a_r;q\right) = \sum_{\co\left(I\right) < F\left(S\right) } q^{\co\left(I\right)} + \frac{m\left(S,F(S)\right)q^{F(S)}}{1-q}.
		\end{equation}
		Thus, we get $I(a_1,\ldots,a_r;q) = \frac{f(q)}{1-q}$ for some polynomial $f(q) \in \ZZ[q]$. Since $m(S,F(S)-1) < m(S,F(S))$, we deduce that $\deg\left(f(q)\right) - 1 = F\left(S\right)$.
	\end{proof}
	
	\section{Computing $I(a,b;q)$}\label{S3}
	
	In this section, we compute the ideal generating function of numerical semigroups $S$ with embedding dimension two. Throughout, we assume $S= \la a,b \ra$ where $\gcd(a,b)=1$ and $a<b$. 
	
	We first associate a tabular representation to $\langle a,b \rangle$. Let $R\left(a\right)$ be the rectangle having $a$ rows and an infinite number of columns. Label the bottom left corner of $R\left(a\right)$ as $0$. We now label the remaining square in $R\left(a\right)$ with integers so that a shift by $1$ up adds $b$ and a shift by $1$ to the right adds $a$. As a result, a square is labelled by $ax+by$ where $\left(x,y\right)$ are the coordinates of the bottom left vertex of the square. Let $R\left(a,b\right)$ denote the grid we obtain after labelling $R\left(a\right)$ in this way. For instance, Figure \ref{fig:fig:1} shows $R\left(3,4\right)$. Notice that each element $s \in \la a,b \ra$ occurs exactly once as a label in $R\left(a,b\right)$.

	\begin{figure}
		
		\begin{center}
			
			\begin{tikzpicture}
				\foreach \i in {1,...,3,4,5,6,7} {
					\foreach \j in {1,...,3} {
						\draw (\i, \j) rectangle (\i+1, \j+1);
					}
				}
				\node at (1.5,1.5) {$0$};
				\node at (2.5,1.5) {$3$};
				\node at (3.5,1.5) {$6$};
				\node at (4.5,1.5) {$9$};
				\node at (1.5,2.5) {$4$};
				\node at (1.5,3.5) {$8$};
				\node at (2.5,2.5) {$7$};
				\node at (3.5,2.5) {$10$};
				\node at (4.5,2.5) {$13$};
				\node at (2.5,3.5) {$11$};
				\node at (3.5,3.5) {$14$};
				\node at (4.5,3.5) {$17$};
				\node at (5.5,1.5) {$12$};
				\node at (5.5,2.5) {$16$};
				\node at (5.5,3.5) {$20$};
				\node at (6.5,1.5) {$15$};
				\node at (6.5,2.5) {$19$};
				\node at (6.5,3.5) {$23$};
				\node at (7.5,1.5) {$\ldots$};
				\node at (7.5,2.5) {$\ldots$};
				\node at (7.5,3.5) {$\ldots$};
			\end{tikzpicture}
		\end{center}
		
		\caption{Labelling of $R(3)$.}
		\label{fig:fig:1}
	\end{figure}
	
	Let $P(a,b)$ be the set of lattice paths $P$ that uses the steps $(1,0)$, $(0,-1)$ and satisfy the following conditions:
	\begin{enumerate}
		\item $P$ starts at $(0,a)$ and ends at $(x,0)$ for some integer $x \in \NN$.
		\item $P$ ends with a step of the form $(0,-1)$.
		\item The number of steps of the form $(1,0)$ between the first and last step of form $(0,-1)$ in $P$ is at most $b$.
	\end{enumerate}
The lattice paths in $P(a,b)$ are also called staircases. It is known that staircases are in one-to-one correspondence with monomial ideals in $\CC[x^a,x^b]$ \cite{hilbn, bria}. This can be restated via the following result:
	\begin{prop}\label{rcat}
		Lattice paths in $P(a,b)$ are in one-to-one correspondence with ideals in $I\left(\la a,b \ra\right)$.
	\end{prop}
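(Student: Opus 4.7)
The plan is to match both sides to the same combinatorial data: weakly decreasing finite sequences of non-negative integers with a bounded spread.

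First I analyze ideals. Since $S=\la a,b\ra$ is generated by $a$ and $b$, a nonempty $I\subseteq S$ is an ideal iff $I+a\subseteq I$ and $I+b\subseteq I$. In the grid $R(a,b)$, adding $a$ to the label at $(x,y)$ yields the label at $(x+1,y)$, while adding $b$ yields $(x,y+1)$ when $y<a-1$, and $(x+b,0)$ when $y=a-1$ (since $ax+b(a-1)+b=a(x+b)$). Letting $c(y)$ be the number of labels in row $y$ not in $I$, the condition $I+a\subseteq I$ forces these labels to be the initial segment $\{(0,y),(1,y),\dots,(c(y)-1,y)\}$ of row $y$, while $I+b\subseteq I$ unpacks into $c(y)\geq c(y+1)$ for $y<a-1$ together with $c(0)\leq c(a-1)+b$. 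Conversely, every sequence $c(0)\geq c(1)\geq\cdots\geq c(a-1)\geq 0$ satisfying $c(0)\leq c(a-1)+b$ defines an ideal by this recipe. Since any ideal of $S$ is cofinite in $S$, each $c(y)$ is finite.

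Next I parametrize paths. Every path in $P(a,b)$ has exactly $a$ down steps; let $d_1\leq d_2\leq\cdots\leq d_a$ denote their $x$-coordinates. Condition (2) forces the path to terminate at $(d_a,0)$, so it is entirely determined by these data. The right steps strictly between the first and last down step number $d_a-d_1$, so condition (3) becomes $d_a-d_1\leq b$. A direct inspection shows that the boxes below $P$ in row $y$ are precisely $(0,y),\dots,(d_{a-y}-1,y)$: between the $(a-y-1)$-th and $(a-y)$-th down steps the path travels along the top edge of row $y$ from $x=d_{a-y-1}$ to $x=d_{a-y}$, enclosing exactly the boxes with $x<d_{a-y}$. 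Setting $c(y):=d_{a-y}$ therefore encodes the path by a weakly decreasing non-negative sequence with $c(0)-c(a-1)\leq b$.

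Since $I(\la a,b\ra)$ and $P(a,b)$ are now parametrized by the same set of sequences, the assignment $I\leftrightarrow P$ induced by equating their sequences is a bijection, and matches $L(P)$ with $S\setminus I$ row by row. The main obstacle is identifying the wrap-around condition in both pictures: recognizing that adding $b$ to a top-row label $(x,a-1)$ lands at $(x+b,0)$ in the bottom row, and matching this geometric shift to condition (3) on the path. Once this identification is made, the monotonicity, non-negativity, and finiteness constraints follow routinely.
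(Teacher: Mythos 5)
Your proof is correct and follows essentially the same route as the paper: both arguments read off, row by row of $R(a,b)$, the initial segment of labels missing from the ideal and match it to the profile of the lattice path, with the wrap-around inequality $c(0)\leq c(a-1)+b$ playing the role of condition (3). Your version merely makes the common parametrization by weakly decreasing sequences explicit, whereas the paper defines the two maps $F$ and $G$ directly and leaves the corresponding verifications to the reader.
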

This can be proved by mapping a lattice path $P \in P(a,b)$ to the set of elements in $R(a,b)$ that lie to the right of $P$. Requirements on $P$ imply that this set is an ideal. Any ideal can be uniquely mapped to a lattice path in $P(a,b)$ that traces the left edges of the smallest labels in each row of $R(a,b)$ that also lie in the ideal. The codimension of an ideal equals the number of squares under its corresponding path.
	\begin{ex}
		Figure \ref{fig:2} shows $R(3,4)$ and an example of a path that corresponds to an ideal. In Figure \ref{fig:2}, let the dark lines denote path $P$. The elements to the right of $P$ clearly form an ideal. 
		\begin{figure}

			\begin{center}
				\begin{tikzpicture}
					\foreach \i in {1,...,3,4,5,6,7} {
						\foreach \j in {1,...,3} {
							\draw (\i, \j) rectangle (\i+1, \j+1);
						}
					}
					\node at (1.5,1.5) {$0$};
					\node at (2.5,1.5) {$3$};
					\node at (3.5,1.5) {$6$};
					\node at (4.5,1.5) {$9$};
					\node at (1.5,2.5) {$4$};
					\node at (1.5,3.5) {$8$};
					\node at (2.5,2.5) {$7$};
					\node at (3.5,2.5) {$10$};
					\node at (4.5,2.5) {$13$};
					\node at (2.5,3.5) {$11$};
					\node at (3.5,3.5) {$14$};
					\node at (4.5,3.5) {$17$};
					\node at (5.5,1.5) {$12$};
					\node at (5.5,2.5) {$16$};
					\node at (5.5,3.5) {$20$};
					\node at (6.5,1.5) {$15$};
					\node at (6.5,2.5) {$19$};
					\node at (6.5,3.5) {$23$};
					\node at (7.5,1.5) {$\ldots$};
					\node at (7.5,2.5) {$\ldots$};
					\node at (7.5,3.5) {$\ldots$};
					\draw [line width=0.7mm, black ] (1,4) -- (2,4);
					\draw [line width=0.7mm, black ] (2,4) -- (2,3);
					\draw [line width=0.7mm, black ] (2,3) -- (3,3);
					\draw [line width=0.7mm, black ] (3,3) -- (4,3);
					\draw [line width=0.7mm, black ] (4,3) -- (4,2);
					\draw [line width=0.7mm, black ] (4,2) -- (5,2);
					\draw [line width=0.7mm, black ] (5,2) -- (5,1);
				\end{tikzpicture}
				
			\end{center}
			\caption{Tabular representation of the ideal $\{11,12,13\}+\la 3,4 \ra$.}
			\label{fig:2}
		\end{figure}
	\end{ex}

	We can map any path $P \in P(a,b)$ to a word $w(P)$ over the alphabet $\{D,R\}$ by replacing the steps $(0,-1), (1,0)$ by $D,R$ respectively. Let $W(a,b)$ be the set words corresponding to paths in $P(a,b)$. Let $w_i$ denote the letter at the $i^{\text{th}}$ place of a word $w$. An \emph{inversion} is a pair $(i,j)$ where $i<j$ and $w_i= R , w_j= D$. Notice that the area under a lattice path $P$ equals the number of inversions in the word $w(P)$. Let $\operatorname{inv}(w)$ denote the number inversions in $w$ and let $Q(n,k)$ denote the set of all words over the alphabet $\{D,R\}$ that have $n,k$ occurrences of the letters $D,R$ respectively. The following result holds \cite{Geo}
	\begin{equation}\label{eq1}
		\binom{n}{k}_{q} = \sum_{w \,\in\, Q(n,k)} q^{\operatorname{inv}(w)},
	\end{equation}
	where 
	\[\binom{n}{k}_{q} = \frac{\prod_{i=1}^{n}(1-q^i)}{\prod_{i=1}^{k}(1-q^i)\prod_{i=1}^{n-k}(1-q^i)}.\]
	Now we are ready to explicitly evaluate $I\left(a,b;q\right)$.
	\begin{thm}\label{I(a,b)}
		The ideal generating function of the numerical semigroup $ \langle a,b \rangle$ is given by
		\[I(a,b;q)=\frac{1}{1-q^a}\binom{a+b-1}{a-1}_{q}.\]
	\end{thm}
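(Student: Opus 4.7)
The plan is to express $I(a,b;q)$ as a sum over words in $W(a,b)$ weighted by inversion number, decompose each word around its first and last occurrences of $D$, and conclude with a $q$-analog of the hockey-stick identity.

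First, I use Proposition \ref{rcat} to identify each ideal $I \in I(\langle a,b\rangle)$ with a unique path $P \in P(a,b)$. Since $L(P) = \langle a,b\rangle \setminus I(P)$ is the set of labels of boxes of $R(a,b)$ lying below $P$, $\operatorname{codim}(I)=|L(P)|$ is precisely the number of such boxes. Passing to the word $w(P) \in W(a,b)$, each box below $P$ corresponds to a pair $(i,j)$ with $i < j$, $w_i = R$, $w_j = D$, hence
\begin{equation*}
I(a,b;q) \;=\; \sum_{w \in W(a,b)} q^{\operatorname{inv}(w)}.
\end{equation*}

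Second, assume $a \geq 2$; the case $a = 1$ is immediate since $\langle 1,b\rangle = \NN$ has only the ideals $\NN_{\geq k}$, giving $I(1,b;q) = 1/(1-q) = \binom{b}{0}_q/(1-q)$. Every word $w \in W(a,b)$ admits a unique decomposition $w = R^{k}\, D\, w''\, D$, where $k \geq 0$ is the number of $R$'s preceding the first $D$ and $w''$ is the block strictly between the first and the last $D$. The conditions defining $P(a,b)$ are equivalent to $w''$ ranging freely over all words with $a-2$ letters $D$ and $m$ letters $R$, for some $m \in \{0,1,\ldots,b\}$. A direct count of inversions yields
\begin{equation*}
\operatorname{inv}(w) \;=\; ka \;+\; m \;+\; \operatorname{inv}(w''),
\end{equation*}
since each of the $k$ leading $R$'s inverts with all $a$ of the $D$'s, each $R$ inside $w''$ inverts with the trailing $D$, and the remaining inversions are precisely those internal to $w''$. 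Summing and invoking (\ref{eq1}),
\begin{equation*}
I(a,b;q) \;=\; \Bigl(\sum_{k \geq 0} q^{ka}\Bigr) \sum_{m=0}^{b} q^{m} \binom{a+m-2}{a-2}_q \;=\; \frac{1}{1-q^a} \sum_{m=0}^{b} q^{m} \binom{a+m-2}{a-2}_q.
\end{equation*}

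Third, it remains to prove the $q$-hockey-stick identity
\begin{equation*}
\sum_{m=0}^{b} q^{m} \binom{a+m-2}{a-2}_q \;=\; \binom{a+b-1}{a-1}_q,
\end{equation*}
which I would establish by induction on $b$ from the $q$-Pascal recurrence
\begin{equation*}
\binom{a+b-1}{a-1}_q \;=\; \binom{a+b-2}{a-1}_q \;+\; q^{b}\binom{a+b-2}{a-2}_q,
\end{equation*}
so that the increment from $b-1$ to $b$ on the right matches the new term $q^{b}\binom{a+b-2}{a-2}_q$ on the left. The main obstacle is the bookkeeping in the inversion count during the word decomposition; once the parameters $k$, $m$, and $w''$ are cleanly identified, the generating function factors and the closing $q$-identity is routine.
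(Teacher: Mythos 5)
Your proof is correct and follows essentially the same route as the paper: both identify ideals with lattice paths/words, peel off the $k$ leading $R$-steps (each contributing $a$ inversions) to produce the geometric factor $\frac{1}{1-q^a}$, and evaluate the remaining inversion sum as a Gaussian binomial via (\ref{eq1}). The only difference is that where the paper asserts $S_1(q)=\binom{a+b-1}{a-1}_{q}$ directly from (\ref{eq1}), you make that step explicit by summing over the number $m$ of interior $R$'s and closing with the $q$-hockey-stick identity, which is a correct and somewhat more complete way to finish.
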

	\begin{proof}
		 Let $S_i\left(a,b\right)$ be the set of paths in $P\left(a,b\right)$ such that the first $(i-1)$ steps are $(1,0)$ and the $(i)^{\text{th}}$ step is $(0,-1)$. Let 
		\[S_i(q) = \sum_{P \in S_i(a,b)} q^{s(P)},\]
		where $s(P)$ denotes the number of squares under $P$. Clearly, we have $S_{i}(q) = q^{a\left(i-1\right)} S_1(q)$. Therefore we deduce
		\[I\left(a,b;q\right) = \frac{S_1 (q)}{1-q^a}.\]
		Any path $P \in S_1(a,b)$ can be uniquely mapped to a word in $Q(a-1,b)$ by first mapping the path $P$ to the word $w(P)$ and then appending $w(P)$  with a suitable number of $R$ so that the total number of occurrences of the letter $R$ is $b$. From equation \eqref{eq1}, we have
		\[S_1(q) = \binom{a+b-1}{a-1}_{q},\]
		thereby proving the desired result.
	\end{proof}

	Using Proposition \ref{0} and Theorem \ref{I(a,b)} we have 
	\begin{equation}\label{rem}
		m(S,F(a,b)) = \lim_{q \to 1} (1-q)I(a,b;q) = \frac{1}{a+b}\binom{a+b}{a}.
	\end{equation}
	This result has been derived in multiple ways. See \cite{w1,w2,w3,w4} for instance. In particular,  Moyano-Fernández and Uliczka \cite{w4} derive this result by enumerating semi-modules. They provide a bijection between isomorphism classes of semimodules of $\langle a,b \rangle$ and the set lattice paths from $(0,a)$ to $(b,0)$ not crossing the diagonal. This proves equation \eqref{rem} since the number of isomorphism classes of semimodules equals $m(\langle a,b \rangle, ab-a-b)$.  They also provide a formula for the number of isomorphism classes of semimodules of $\langle a,b \rangle$ that are minimally generated by $r$ elements. However, even though lattice paths are at the heart of both methods, they differ in that our method relies on counting ideals with respect to codimension, whereas their method relies on counting the number of isomorphism classes of semimodules with respect to the minimal number of generators.

 A simple calculation yields $\deg\left(I\left(a,b;q\right)\right)=ab-a-b$. Thus using Proposition \ref{0}, we rederive Sylvester's theorem which states $F(a,b)=ab-a-b$. 
	Moreover, Proposition \ref{0} also implies that 
	\[\binom{a+b-1}{a-1}_{q} = (1+q+\ldots+q^a)f(q),\]
	for some polynomial $f(q) \in \ZZ[q]$.
	Since $q$-binomial coefficients are unimodal \cite{unimod} and have positive integer coefficients, we deduce that $f(q) \in \NN[q]$. This implies Conjecture \ref{conjros} is true for all numerical semigroups with embedding dimension $2$. 
	\section{Computing $I(a,b,c;q)$}\label{S4}
	
	The main goal of this section is to compute the ideal generating function of a certain family of numerical semigroups with embedding dimension three. In particular, we compute $I(a,b,c;q)$ when $a<b<c$ are mutually coprime positive integers such that $a \mid b+c$, by using a tabular representation similar to the one used in the previous section. In what follows, we assume that $a,b,c$ satisfy these requirements. To provide a tabular representation for $\la a,b,c \ra$, we first need the following result.
	
	\begin{prop}\label{uniq}
		Given a semigroup of the form $\la a,b,c \ra$ with $a\mid b+c$, there exists a positive integer $k_1$ such that any $s \in \la a,b,c \ra$ can be uniquely written as either
		$a x + by$ or $ ax+cz$ where $x \in \NN, y \in [k_1-1] $ and, $ z \in [a-k_1] \setminus \{0\}$. 
	\end{prop}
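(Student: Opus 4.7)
The plan is to exploit the hypothesis $a \mid b+c$: writing $b + c = ka$ with $k$ a positive integer, we have $c \equiv -b \pmod{a}$. Since $\gcd(a,b) = 1$, for any $k_1 \in \{1, \ldots, a\}$ the set
\[
\{jb \bmod a : 0 \leq j \leq k_1 - 1\} \;\cup\; \{zc \bmod a : 1 \leq z \leq a - k_1\}
\]
equals $\{jb \bmod a : -(a - k_1) \leq j \leq k_1 - 1\}$, i.e.\ $a$ consecutive integer multiples of $b$ modulo $a$, which form a complete residue system. This already delivers the uniqueness half of the proposition: given $s \in \la a,b,c\ra$, its residue modulo $a$ singles out either a unique $y \in [k_1-1] \cup \{0\}$ with $s \equiv by \pmod a$ or a unique $z \in [a - k_1]$ with $s \equiv cz \pmod a$ (never both), after which $x = (s - by)/a$ or $x = (s - cz)/a$ is forced.

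It remains to choose $k_1$ so that this forced $x$ is a non-negative integer; equivalently, so that $by$ (resp.\ $cz$) is the minimum element of $\la a,b,c\ra$ in its residue class mod $a$ -- its Ap\'ery element. The key lemma I would prove is
\[
\min\bigl\{s \in \la a,b,c\ra : s \equiv jb \pmod a\bigr\} \;=\; \min\bigl(jb,\,(a-j)c\bigr), \qquad 0 \leq j \leq a - 1,
\]
with the convention that $j = 0$ gives $0$. Writing a candidate as $\alpha a + \beta b + \gamma c$ with $\alpha, \beta, \gamma \in \NN$, one may assume $\alpha = 0$, and then use $b + c = ka$ to exchange each $(b, c)$ pair for $k$ copies of $a$. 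This consolidates every representation into $\beta - \gamma \in \{j, j - a\}$, yielding the two pure candidates $jb$ and $(a-j)c$; every other congruence class for $\beta - \gamma$ modulo $a$ strictly increases the value.

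Finally, I would choose $k_1 \in \{1, \ldots, a\}$ to be the unique integer with $jb \leq (a-j)c$ precisely on $0 \leq j \leq k_1 - 1$; rearrangement using $b + c = ka$ produces the explicit value $k_1 = \lceil c/k \rceil$. The Ap\'ery formula then shows the minimum representative in each residue class is one of $0, b, 2b, \ldots, (k_1 - 1)b$ or $c, 2c, \ldots, (a-k_1)c$, so every $s \in \la a,b,c\ra$ is at least its Ap\'ery element, and the required $x$ is a non-negative integer. I expect the main obstacle to be the Ap\'ery calculation itself -- specifically, ruling out that a mixed representation with both $\beta, \gamma \geq 1$ could beat both pure candidates. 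The identity $b + c = ka$ is exactly what lets every mixed pair be traded for $k$ copies of $a$ without changing the sum, so such mixed forms always collapse into one of the two pure options; the remainder is bookkeeping.
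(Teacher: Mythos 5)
Your proof is correct, and it takes a genuinely different route from the paper's. The paper defines $k_1$ implicitly, as the least positive integer with $bk_1 \in \la a,c\ra$ (and $k_2$ symmetrically for $c$), proves existence by an iterative reduction --- repeatedly trading a $(b,c)$ pair for copies of $a$ and arguing termination because the coefficient of $a$ strictly increases --- and gets uniqueness by noting that $ax+by=ax'+cz$ would force $by \in \la a,c\ra$, contradicting the minimality of $k_1$. You instead compute the Ap\'ery set of $\la a,b,c\ra$ with respect to $a$ in closed form, $\min\{s \in \la a,b,c\ra : s \equiv jb \pmod a\} = \min\bigl(jb,(a-j)c\bigr)$, using the same exchange $b+c=ka$ as the engine, and then read off existence (every element exceeds its Ap\'ery representative by a multiple of $a$) and uniqueness (the $a$ candidate representatives occupy distinct residue classes, since $c\equiv -b \pmod a$ and $\gcd(a,b)=1$) in one stroke. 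What your version buys is an explicit description of the Ap\'ery set and an explicit formula $k_1=\lceil c/k\rceil$, neither of which the paper's argument yields; what the paper's version buys is a characterization of $k_1$ by minimality that it reuses directly in the uniqueness step. Two small points you should write out in a full version: ties $jb=(a-j)c$ with $0<j<a$ cannot occur, since $\gcd(b,c)=1$ would force $c\mid j$ while $0<j<a<c$, so the crossover index is well defined and uniqueness is not threatened; and $k\nmid c$ (otherwise $k\mid\gcd(b,c)=1$, while $a<b<c$ rules out $k=1$), so $\lceil c/k\rceil=\lfloor c/k\rfloor+1$ and your ceiling formula agrees with the crossover condition $jk\le c$.
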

	\begin{proof}  The proof is left as an exercise for the reader \end{proof}
	%		Let  $k_1$ and $k_2$ be the least positive integers such that $b k_1 \in  \la a,c \ra , c k_2 \in \la a,b\ra$. Note that $k_1,k_2 \in [a-1]$.
	%		We now prove that any element $s \in \la a,b,c \ra$ can be uniquely written as $ax+by$ or $ax+cz$ where
	%		$x \in \NN, y \in [k_1-1], z \in [k_2-1]\setminus\{0\}$.
	%		Let $c k_2 = aj + bk$ for some $j,k \in \NN$. 
	%		Clearly, any $s=ax_1+bx_2+cx_3$ can be written in the form $s=a x + bv+cw$ for some $v,w \in \NN_{\leqslant a-1}$. Since $a \mid b+c$, this can be reduced to a representation of the form $s= ax + bu$ or $s=a x + cu$ for some $u \in [a-1] $. For the sake of brevity, assume $s= ax+cu$ where $u >k_2-1$. 
	%		Let $i \in \NN$ be such that $0\leqslant u-ik_2<k_2$. We now have two cases: $ik \leqslant u-ik_2$ and $u-ik_2<ik$. In the first case we can write 
	%		\[s= a\left(x+ij\right)+b\left(ik\right)+c\left(u-ik_2\right)= a\left(x+ij+n_{a,b}ik\right) + c\left(u-ik_2-ik\right).\]
	\noindent
	%		Since, $u-ik_2-ik \in [k_1-1]$, we get a representation of the desired form.
	
	%		In the second case, we have 
	%\[s= a\left(x+ij\right)+b\left(ik\right)+c\left(u-ik_2\right)= a\left(x+ij+n_{a,b}\left(u-ik_2\right)\right) + b\left(i\left(k_2+k\right)-u\right).\]
	
	%	If $i(k_2+k)-u \notin [k_2-1]$, we can equivalently write $s$ as $s=ax(1)+bu(1)$ for some $u \in [a-1]\setminus\{0\}$. If $u(1)\notin [k_2-1]$, we can repeat this process to get a representation of the form $s=ax(2)+cu(2)$. Since the size of the coefficient of $a$ strictly increases at every step, $u(2)<u$. 
	%	Clearly, we can continue this process until we have a representation of the form $ax+by$ or $ax+cz$ where
	%	$x \in \NN, y \in [k_1-1], z \in [k_2-1]\setminus\{0\}$. Now suppose $ax+by= ax'+cz$ for some $y \in [k_1-1], z \in [k_2-1]$. Since $\gcd(b,c) = 1$, we get $x \neq x'$. For the sake of brevity, assume $x<x'$. We now get $by = a(x'-x)+cz \in \la a,c \ra$ which contradicts the minimality of $k_1$. Thus the representation is unique. Since $\la a,b,c \ra$ contains all residue classes modulo $a$, we get $k_1+k_2-1=a$. 
	
	By Proposition \ref{uniq}, any element $s \in \la a,b,c \ra$ occurs exactly once in one of the sequences $\{d_{i,j}\}_{j=1}^{\infty}$ defined by 
	
	\[ d_{i,j} = 
	\begin{cases}
		c\left(a+1-k_1-i\right)+a\left(j-1\right) \,\,\,\,\,\,\,\,\,\,\, i \in [a-k_1]\setminus\{0\} \\
		a\left(j-1\right)  \,\,\,\,\,\,\,\,\,\,\,\,\,\,\,\,\,\,\,\,\,\,\,\,\,\,\,\,\,\,\,\,\,\,\,\,\,\,\,\,\,\,\,\,\,\,\,\quad \,\,\,\, i=a-k_1+1\\
		b\left(i+k_1-a-1\right)+a\left(j-1\right) \,\,\,\,\,\,\,\,\,\,\, i \in \{a-k_1+2,a-k_1+3,\ldots,a\}.
	\end{cases}\]
	
	Thus, similar to the previous section, we can give a tabular representation of $\la a,b,c \ra$ by labelling the square in row $i$ and column $j$  in $R\left(a\right)$ by $d_{i,j}$. Let $R\left(a,b,c\right)$ denote the grid we obtain after labelling $R\left(a\right)$ in this way. For an ideal $I \subseteq I(a,b,c)$, let $d_{k}(I)$ denote the number of elements in the $k^{\text{th}}$ row of $R(a,b,c)$ that are not in $I$, that is $d_{k}(I) = |\{d_{k,i}\}_{i \geqslant 1} \setminus I|$. 
	
	\begin{prop}\label{length}
		Given any ideal $I \in I(a,b,c)$, the following inequalities hold \[d_{1}(I)\leqslant d_{2}(I) \leqslant \ldots \leqslant d_{a-k_1+1}(I), \quad d_{a}(I)\leqslant d_{a-1}(I) \leqslant \ldots \leqslant d_{a-k_1+1}(I). \]
	\end{prop}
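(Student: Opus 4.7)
The plan is to exploit the structural relationships between the labels in adjacent rows of $R(a,b,c)$ that come directly from adding the semigroup generators $b$ and $c$. First I would observe a preliminary fact about individual rows: within a single row, $d_{i,j+1} = d_{i,j} + a$, and since $a \in S$ and $I$ is an ideal, the set $\{j : d_{i,j} \in I\}$ is upward closed. Hence one can define $f_i(I) = \min\{j : d_{i,j} \in I\}$ (finite since $I$ has finite codimension), and $d_i(I) = f_i(I) - 1$. So the inequalities between $d_i(I)$'s translate to inequalities between the $f_i(I)$'s.

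The heart of the argument will be two row-shift identities derived by direct computation from the piecewise definition of $d_{i,j}$. For the ``top half'' chain, I would verify that
\[ d_{i-1,j} = d_{i,j} + c \quad \text{for all } i \in [2,\,a-k_1+1]. \]
This splits into the case $i = a-k_1+1$ (where $d_{i,j} = a(j-1)$ and adding $c$ lands in row $a-k_1$, matching the formula $c(a+1-k_1-(a-k_1)) + a(j-1) = c + a(j-1)$) and the case $i \in [2, a-k_1]$ (both rows lie in the $c$-coefficient range, and the formulas differ only by a $+c$). For the ``bottom half'' chain, I would verify in exactly the same way that
\[ d_{i+1,j} = d_{i,j} + b \quad \text{for all } i \in [a-k_1+1,\, a-1]. \]

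Once these two identities are in hand, the proposition follows almost immediately. If $d_{i,j} \in I$, then $d_{i,j} + c \in I$ because $c \in S$ and $I+S \subseteq I$; by the first identity this gives $d_{i-1,j} \in I$, so $f_{i-1}(I) \leq f_i(I)$, i.e. $d_{i-1}(I) \leq d_i(I)$, for $i \in [2, a-k_1+1]$. The same argument with $b$ in place of $c$ and the second identity yields $d_{i+1}(I) \leq d_i(I)$ for $i \in [a-k_1+1, a-1]$. Chaining these inequalities produces the two monotonicity statements.

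I do not anticipate a serious obstacle; the whole content is the two row-shift formulas, and these are routine once the three cases of the definition of $d_{i,j}$ are unpacked. The only mildly delicate point is making sure the indices stay inside the ranges where the ``clean'' shifts $+b$ and $+c$ move within the table, which is precisely why the chains stop at the middle row $i = a-k_1+1$: beyond it, $d_{1,j}+c$ and $d_{a,j}+b$ leave the normal-form region (forcing use of the relation $bk_1 \in \la a,c\ra$ or its counterpart) and so no analogous one-step identity holds.
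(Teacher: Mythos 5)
Your proposal is correct and follows essentially the same route as the paper: adjacent rows of $R(a,b,c)$ differ by adding $c$ (upper block) or $b$ (lower block), so the ideal property forces the first-in-ideal position to be monotone toward the middle row, with upward-closedness along each row (via $+a$) identifying $d_i(I)$ with that position minus one. Your version is in fact more carefully stated than the paper's, which contains an index slip (deriving $d_{i-1,m}\in I$ while asserting $d_i(I)\le d_{i+1}(I)$) and leaves the row-wise upward-closedness implicit.
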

	\begin{proof}
		We first prove that $d_{i}(I) \leqslant d_{i+1}(I)$ for $i \in [a-k_1-1]$. Let $d_i(I) = m-1$. Thus, $d_{i,m} \in I$ which implies $d_{i,m} + c = d_{i-1,m} \in I $. Therefore, $d_{i}(I) \leqslant d_{i+1}(I)$. The remaining set of inequalities can be proven analogously.
	\end{proof} 
 \begin{rem}\label{rem1}
     It can be analogously proved that any non-negative integer sequence $\{x_i\}_{i=1}^{a}$ that satisfies
	\[x_{1}\leqslant x_{2} \leqslant \ldots \leqslant x_{a-k_1+1}, \quad x_{a}\leqslant x_{a-1} \leqslant \ldots \leqslant x_{a-k_1+1}, \]
	corresponds to an ideal $I \in I(a,b,c)$ by mapping the sequence $\{x_i\}_{i=1}^{a}$ to $\bigcup_{i=1}^{a} \{d_{i,j}\}_{j > x_j}$
 \end{rem}
	 For some $k \in \NN_{\geqslant 1}$, denote by $R_{k}\left(a,b,c\right)$ the set of ideals $I \subseteq I(a,b,c)$ such that we have $d_{j}(I)\geq k-1$ for all $j \in [a]$ and $d_{1,k} \in I$ or $d_{a,k} \in I$. Since any ideal belongs to exactlt one of the sets $R_i(a,b,c)$, we obtain the set partition $I(a,b,c) = \bigcup_{i \in \NN_{\geqslant 1}} R_i(a,b,c)$. We now state one of our main results.
	\begin{thm}\label{thma}
		The ideal generating function for the semigroup $\langle a,b,c \rangle$ is given by $$I\left(a,b,c;q\right) = \dfrac{1}{1-q^a}\left(\sum_{I\,\in \,R_1(a,b,c)} {q^{\co(I)}}\right).$$
	\end{thm}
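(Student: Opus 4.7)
The plan is to prove the theorem by exhibiting, for every $k \geq 1$, a bijection $\phi_k : R_k(a,b,c) \to R_1(a,b,c)$ that decreases the codimension by exactly $a(k-1)$; the identity will then follow by summing a geometric series over $k$.

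First, I would unpack the row structure of an ideal. Since $a \in S$ and each row of $R(a,b,c)$ is an arithmetic progression with common difference $a$, whenever $d_{i,j}\in I$ one also has $d_{i,j+1}=d_{i,j}+a\in I$. Hence the elements of $I$ lying in row $i$ form a tail $\{d_{i,j} : j \geq d_i(I)+1\}$, and $\co(I)=\sum_{i=1}^{a} d_i(I)$. Combining this with Proposition \ref{length}, the minimum of $d_1(I),\dots,d_a(I)$ is attained at $i=1$ or $i=a$; the index $k$ with $I\in R_k(a,b,c)$ is then $k=1+\min_i d_i(I)$, and the defining condition ``$d_{1,k}\in I$ or $d_{a,k}\in I$'' is exactly the statement that this minimum is achieved, which justifies the set partition $I(a,b,c)=\bigsqcup_{k\geq 1} R_k(a,b,c)$ asserted in the text.

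The heart of the argument is the shift map $\phi_k(I) = I - a(k-1) \defeq \{s - a(k-1) : s \in I\}$. I would check the following four points. (i) $\phi_k(I)\subseteq S$: every $s\in I$ sits in some row $i$ at column $j \geq d_i(I)+1 \geq k$, so $s-a(k-1)=d_{i,j-k+1}$ is a legitimate label in $R(a,b,c)$. (ii) $\phi_k(I)$ is an ideal: $\phi_k(I)+S=(I+S)-a(k-1)\subseteq I-a(k-1)=\phi_k(I)$. (iii) $\phi_k(I)\in R_1(a,b,c)$: each row deficit drops by exactly $k-1$, so $\min_i d_i(\phi_k(I))=0$, which places the image in $R_1$. (iv) Bijectivity and the weight shift: the inverse is the translation $I' \mapsto I' + a(k-1)$, which lands in $R_k$ by the same row-deficit bookkeeping, and by (iii) we have $\co(\phi_k(I))=\co(I)-a(k-1)$.

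Given these, the theorem drops out:
\[
I(a,b,c;q)\;=\;\sum_{k\geq 1}\sum_{I\in R_k(a,b,c)} q^{\co(I)}\;=\;\sum_{k\geq 1} q^{a(k-1)}\sum_{I'\in R_1(a,b,c)} q^{\co(I')}\;=\;\frac{1}{1-q^a}\sum_{I'\in R_1(a,b,c)} q^{\co(I')}.
\]
The main obstacle is purely bookkeeping: making sure the translation map really lands in $R_k$ (rather than some $R_{k'}$ with $k'>k$) and that every ideal falls into a unique $R_k$. Both reduce to applying Proposition \ref{length} carefully at the two extreme rows $i=1$ and $i=a$, so no genuinely new idea beyond the row-tail description of ideals is required.
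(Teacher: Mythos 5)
Your proposal is correct and takes essentially the same route as the paper: the paper also establishes the bijection between $R_1(a,b,c)$ and $R_m(a,b,c)$ via translation by $a(m-1)$ and then sums the resulting geometric series $S_m(q)=q^{a(m-1)}S_1(q)$. You supply somewhat more explicit bookkeeping (the row-tail description, $\co(I)=\sum_i d_i(I)$, and the exact drop of each row deficit) than the paper, which asserts the codimension shift directly from the bijection, but the underlying argument is identical.
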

	\begin{proof} For some positive integer $ m \in \NN_{\geqslant 1}$, let $$F: R_1\left(a,b,c\right) \mapsto R_m\left(a,b,c\right), \quad G: R_m\left(a,b,c\right) \mapsto R_1\left(a,b,c\right),$$ be maps defined by $F(I)= I+a(m-1)$ and $G(I)= I-a(m-1)$ respectively. We first prove that $F$ is well defined. Clearly, for $I \in I(a,b,c)$ we have $I+a \in I(a,b,c)$. Thus $I+a(m-1)$ is an ideal. Since $d_{1,1} \in I$ or $d_{a,1} \in I$ we have $d_{1,1}+a(m-1) = d_{1,m} \in I+a(m-1)$ or $d_{a,1}+a(m-1) = d_{a,m} \in I+a(m-1)$. Thus $I+a(m-1) \in R_{m}(a,b,c)$. One can analogously prove $G$ is well-defined. Clearly $F,G$ are injective implying $|R_1(a,b,c)|=|R_m(a,b,c)|$. Now let 
		\[S_i(q) = \sum_{I \in R_i(a,b,c)} q^{\co(I)}.\]
		From the bijection between $R_1(a,b,c)$ and $R_m(a,b,c)$ we deduce that
		\[S_{m}(q) = q^{a(m-1)} S_{1}(q).\]
		Thus, we have 
		\[I(a,b,c;q) = \sum_{m=1}^{\infty} S_m(q) = \frac{S_1(q)}{1-q^a},\]
		as desired.
	\end{proof}
	
	As an application of this result, we now compute $I\left(a,b,c;q\right)$ for a particular parametrized special case. 
	
	\subsection{Computing $I(3,n+2,2n+1)$} Throughout this section we assume that $n \in \NN$ and $n \neq 1 \pmod 3$. We enumerate ideals in $I(3,n+2,2n+1)$ by removing certain elements from the tabular representation of $\la 3,n+2,2n+1 \ra$. Since $2(n+2)=3+(2n+1)$ and $2(2n+1)= 3n + (n+2)$, we get that any element $s \in \langle 3,n+2,2n+1 \rangle$ occurs exactly once in one of the sequences $\{d_{1,j}\}_{j=0}^{\infty},\{d_{2,j}\}_{j=1}^{\infty},\{d_{3,j}\}_{j=1}^{\infty}$ that satisfy $d_{i,j+1}=d_{i,j}+3$ and begin with $d_{1,1}=2n+1$, $d_{2,1}=3$, $d_{3,1}=n+2$.  Figure \ref{fig:ffig} shows the tabular representation of $\la 3,n+2,2n+1 \ra$.

	We can partition the set $ R_1(3,n+2,2n+1)$ as $\la 3,n+2,2n+1 \ra \cup P_1 \cup P_2 \cup P_3$ where 
	\begin{align*}
		&P_1= \{I \in R_1\left(3,n+2,2n+1\right) \mid 0 \notin I, n+2 \in I, 2n+1 \in I\}, \\ &P_2= \{I \in R_1\left(3,n+2,2n+1\right) \mid 0 \notin I, n+2 \in I, 2n+1 \notin I\}, \\& P_3= \{I \in R_1\left(3,n+2,2n+1\right) \mid 0 \notin I, n+2 \notin I, 2n+1 \in I\}.
	\end{align*} Denote by $S\left(i_1,i_2,i_3\right)=\{d_{1,i}\}_{i \geqslant i_1} \cup \{d_{2,i}\}_{i \geqslant i_1} \cup \{d_{3,i}\}_{i \geqslant i_3}$. For $i \in \{1,2,3\}$, let \[F\left(P_i, q\right) = \sum_{I \in P_i} q^{\co\left(I\right)}.\]
	
	Since for any ideal $I \in R_1(3,n+2,2n+1)$, either $n+2 \in I$ or $2n+1 \in I$ we get that $n+2+2n+1=3n+3 \in I$. Thus for any ideal $I \in R_1$, we have $ d_{2}(I) \in  [n+1]\setminus\{0\}$. Clearly, any set of the form $S(0,i,0)$ where $i \in [n+1]\setminus\{0\}$ is an ideal. From Proposition \ref{length} we have $P_1 = \{S(0,i,0) \mid i\in [n+1]\setminus\{0\}\}$. Thus, we have 
	\begin{equation} \label{1}
		F\left(P_1,q\right) = \sum_{i=1}^{n+1} q^{i}.
	\end{equation}
	
	Next, we calculate $F(P_2,q)$. Since for an ideal $I \in P_2$, $n+2 \in I$ we have that $ (n+2)+(n+2)=2n+4 \in I $. This implies that $d_{1}(I)=1$.  Clearly, any set of the form $S(1,i,0)$ where $i \in [n+1]\setminus\{0\}$ is an ideal. From Proposition \ref{length} we have $P_2 = \{S(1,i,0) \mid i \in [n+1]\setminus\{0\} \}$. Thus, we deduce that 
	\begin{equation} \label{2}
		F\left(P_2,q\right) = \sum_{i=1}^{n+1} q^{i+1}.
	\end{equation}
	
	Finally we calculate $F(P_3,q)$. Since $2n+1 \in I$ for any ideal $I \in P_3$,  the inclusion $ (2n+1)+(2n+1)=4n+2 \in I $ holds. This implies that $d_{3}(I) \in [n]\setminus\{0\}$. From Proposition \ref{length}, we conclude that any set of the form $S(0,i,j), S(0,n+1,k)$ where $1<j<i$, $i \in [n]\setminus\{0\}$ , and $k \in [n]\setminus\{0\}$ is an ideal. Thus we have $$P_3= \left\{S\left(0,i,j\right) \mid i \in [n]\setminus\{0\}, j\in[i]\setminus\{0\}\right\}\cup \left\{S\left(0,n+1,i\right) \mid i \in [n]\setminus\{0\}\right\}.$$ Thus, we have 
	\begin{equation} \label{3}
		F\left(P_3,q\right) = \sum_{i=1}^{n} q^{i} \sum_{j=1}^{i} q^{j} + q^{n+1}\sum_{i=1}^{n}q^i.
	\end{equation}
	Using Theorem \ref{thma} and combining \eqref{1}, \eqref{2}, \eqref{3} gives us the following result.
	\begin{figure}
		\centering

		\begin{tikzpicture}
			
			\begin{scope}[xshift=0cm]
				\foreach \i in {1,...,6} {
					\foreach \j in {1,...,3} {
						\draw (\i, \j) rectangle (\i+1, \j+1);
					}
				}
				
				\node at (1.5,1.5) {\scalebox{0.8}{$2n+1$}};
				\node at (1.5,2.5) {\scalebox{0.9}{$0$}};
				\node at (1.5,3.5) {\scalebox{0.9}{$n+2$}};
				\node at (2.5,1.5) {\scalebox{0.8}{$2n+4$}};
				\node at (2.5,2.5) {\scalebox{0.9}{$3$}};
				\node at (2.5,3.5) {\scalebox{0.9}{$n+5$}};
				\node at (3.5,1.5) {$\ldots$};
				\node at (3.5,2.5) {$\ldots$};
				\node at (3.5,3.5) {$\ldots$};
				\node at (4.5,1.5) {\scalebox{0.8}{$5n+3$}};
				\node at (4.5,2.5) {\scalebox{0.8}{$3n$}};
				\node at (4.5,3.5) {\scalebox{0.8}{$4n+2$}};
				\node at (5.5,1.5) {\scalebox{0.8}{$5n+6$}};
				\node at (5.5,2.5) {\scalebox{0.8}{$3n+3$}};
				\node at (5.5,3.5) {\scalebox{0.8}{$4n+5$}};
				\node at (6.5,1.5) {\scalebox{1}{$\ldots$}};
				\node at (6.5,2.5) {\scalebox{1}{$\ldots$}};
				\node at (6.5,3.5) {\scalebox{1}{$\ldots$}};
			\end{scope}
		\end{tikzpicture}
		
		\caption{ Tabular representation of $R(3,n+2,2n+1)$.}

		\label{fig:ffig}
		
	\end{figure}

	\begin{thm}\label{3,n+2}
		The ideal generating function for the numerical semigroup $\langle 3,n+2,2n+1\rangle$ is given by
		\begin{align}
			I\left(3,n+2,2n+1;q\right) =	\frac{1}{1-q^3} \left( \frac{(q^{n+1}-1)(q^{n+3}+q^{n+2}-q^{n+1}+q^4-q^2-1)}{\left(1-q\right)^2\left(1+q\right)}\right).\nonumber
		\end{align}
	\end{thm}
	
	It can be easily calculated that $\deg(I(3,n+2,2n+1;q)) = 2(n-1)$. We now use Theorem \ref{0} to deduce that $F\left(3,n+2,2n+1\right)=2n-2$ for all $n \not\equiv 1 \pmod 3$. 
	Moreover, Theorem \ref{0} also implies 
	\begin{equation}
		\frac{(q^{n+1}-1)(q^{n+3}+q^{n+2}-q^{n+1}+q^4-q^2-1)}{\left(1-q\right)^2\left(1+q\right)} = (1+q+q^2)f(q),\nonumber
	\end{equation}
	for some polynomial $f(q) \in \ZZ[q]$. However, since the polynomial 
	\[P\left(q\right)=\frac{(q^{n+1}-1)(q^{n+3}+q^{n+2}-q^{n+1}+q^4-q^2-1)}{\left(1-q\right)^2\left(1+q\right)},\]
	is unimodal and $P(q) \in \NN[q]$, we get that $f(q)\in \NN[q]$. This proves Conjecture \ref{conjros} for all numerical semigroups of the form $\la 3,n+2,2n+1 \ra$ with $n \not\equiv 1 \pmod 3$. Moreover, we also have 
	\[m\left(\la 3,n+2,2n+1 \ra , 2(n-1)\right) = \frac{n^2+7n+6}{6}.\]

\section{Counting Ideals with Respect to Conductor}

	\begin{figure}
		
		\begin{center}
			
			\begin{tikzpicture}
				\draw (1,1) rectangle (2,2);
				\draw (2,1) rectangle (3,2);
				\draw (2,2) rectangle (3,3);
				\draw (3,1) rectangle (4,2);
				\draw (3,2) rectangle (4,3);
				\draw (3,3) rectangle (4,4);
				\node at (1.5,1.5) {$0$};
				\node at (2.5,1.5) {$3$};
				\node at (3.5,1.5) {$6$};
				\node at (2.5,2.5) {$4$};
				\node at (3.5,2.5) {$7$};
				\node at (3.5,3.5) {$8$};

				\begin{scope}[xshift=3cm]
					\foreach \i in {1,...,3} {
						\foreach \j in {1,...,3} {
							\draw (\i, \j) rectangle (\i+1, \j+1);
						}
					}
					\node at (1.5,1.5) {$9$};
					\node at (2.5,1.5) {$12$};
					\node at (3.5,1.5) {$\ldots$};
					\node at (1.5,2.5) {$10$};
					\node at (1.5,3.5) {$11$};
					\node at (2.5,2.5) {$13$};
					\node at (3.5,2.5) {$\ldots$};
					\node at (2.5,3.5) {$14$};
					\node at (3.5,3.5) {$\ldots$};
				\end{scope}	
			\end{tikzpicture}
		\end{center}
		
		\caption{Grid associated to $\mathcal{F}_3$.}
		\label{fig:fig:fig}
	\end{figure}
	
	This section is devoted towards studying the sequence $n(\mathcal{F}_a,m_{\mathcal{F}_a})$ where $a \in \NN_{\geqslant 2}$ and $\mathcal{F}_a = \langle a,a+1 \rangle$. To every numerical semigroup $\mathcal{F}_a$ we associate the grid-like structure as follows:
	
	\noindent
	Let $C(a)$ denote the grid we obtain after removing the contiguous block of $(i-1)$ squares starting from the first square from row $i$ of $R(a)$. We now label squares in $C(a)$ with entries from $\mathcal{F}_{a}$. Label the square in row $i$ and column $j$ of $C(a)$ with $(a+1)(i-1)+a(j-i)$. Let $L(a)$ denote the grid we obtain after labelling $C(a)$ in this way. Clearly any element $s \in \mathcal{F}_{a}$ occurs exactly once as an entry in the grid $L(a)$. Figure \ref{fig:fig:fig} shows $L(3)$.  Let $L(i,j)$ denote the label of the square in row $i$ and column $j$ of $L(a)$. For $i \in 1+[a-2]$ and $j \in \NN_{\geqslant a}$, an easy calculation yields
	\begin{equation}\label{+1}
		L(i,j)+1=L(i+1,j) , L(a,j)+1=L(1,j+1).
	\end{equation}
		From the definition of ideal we deduce \begin{equation}\label{r1}
		L(i,j) \notin I \implies L(i,j-1),L(i-1,j-1) \notin I.
	\end{equation} 
	For $i \in [a] \setminus\{0\}$ and $i \leq j$, let $\mathcal{L}(i,j) = \bigcup_{k \in i+[j-i]}L_{i,k}$. Since $L(i,j) = L(i,j-1)+a$ for $j>i$, repetitively using \eqref{r1} produces 
	\begin{equation}\label{r3}
		L(i,j) \notin I \implies \bigcup_{k \in [i-1]}\mathcal{L}(i-k,j-k)\notin I.
	\end{equation}
	Since $L(1,a+k+1) = L(a,a+k-1)+a+1$, a similar procedure yields
	\begin{equation}\label{r2}
		L(1,a+k+1) \notin I \implies \bigcup_{i \in [a-1]}\mathcal{L}(a-i,a+k-i-1) \notin I.
	\end{equation}
	Conversely, it's easy to see that any set $\mathcal{L} = \bigcup_{i\in I, j \in J} L_{i,j}$ of labels that satisfies equations \eqref{r3}, \eqref{r2} is an ideal. For any ideal $I \in I(\mathcal{F}_a)$ and an integer $i \in [a]$, let $r(i,I) = \max\{k \mid \mathcal{L}(i,k) \notin I\}-i+1$ where we set $r(i,I)=0$ if $L_{i,i} \in I$. Any ideal can be uniquely recovered from the vector $(r(1,I),r(2,I),\ldots,r(a,I))$. We now state our main result.
	\begin{thm}\label{cn}
		Let $C_n$ denote the $n^{\text{th}}$ Catalan number defined by 
		\[C_n \coloneqq \frac{1}{2n+1} \binom{2n}{n}.\]
		Then for all integers $m \in \NN$, we have 
		\[n\left(\mathcal{F}_{a} , a^2 + \left(m-1\right)a\right) = C_{a}.\]
	\end{thm}
	\begin{rem}
		Since $F\left(\mathcal{F}_a\right) = a^2-a-1$, Theorem \ref{cn} combined with Proposition \ref{ms} implies that $n\left(\mathcal{F}_a , n\right) = C_{a}$ for all integers $2a^2-2a\leqslant n$.	
	\end{rem}
	
	In order to prove Theorem \ref{cn} we need a few auxiliary results. A Dyck path of order $n$ is a lattice path that uses the steps of the form $(1,0),(0,1)$, begins at $(0,0)$, ends at $(n,n)$ and strictly stays above the line $y=x$. Let $D_n$ denote the set of Dyck paths of order $n$ and $S(n)$ denote the squre grid having $n$ rows and $n$ columns. It is known that $|D_n|=C_n$. For any path $\pi \in D_n$, let $a_i(\pi)$ denote the number of squares in row $i$ of $S(n)$ that are to the right of $\pi$ and to the left of the line $y=x$. The statistic $\operatorname{area}(\pi)$ is defined as $\sum_{i=1}^{n} a(i)$. Carlitz and Riordan define a $q$-analog of Catalan numbers with respect to the statistic $\operatorname{area}$ as \[C_n(q) = \sum_{\pi \in D_n} q^{\operatorname{area}(\pi)}.\]
	Let $p_a(n)$ denote the number of vectors  with $a$ components and with positive integer entries that satisfy the following conditions 
	\begin{enumerate}
		\item $p_i \in [a-i+1]$,
		\item $p_{i} \geqslant p_{i+1}$,
		\item $\sum_{i=1}^{a} p_{i} = n.$
	\end{enumerate}
	Let $P_a = \{(p_1,p_2,\ldots,p_a) \mid  p_i \in [a-i+1] , p_{i} \geqslant p_{i+1}\}$. Clearly, $|P_a| = \sum_{n=1}^{\frac{a(a+1)}{2}} p_a(n)$.
	
	\begin{prop}\label{Pa}
		The generating function for the sequence $p_a(n)$ is 
		\[\sum_{n=1}^{\frac{a(a+1)}{2}}p_{a}\left(n\right)q^n = q^{\binom{a+1}{2}} C_{a+1} \left(\frac{1}{q}\right).\]
	\end{prop}
	\begin{proof}
		For any Dyck path $\pi \in D_{a+1}$, let $r(\pi,i) = i-1-a(i)$. Equivalently, $r(\pi,i)$ is the number of squares in row $i$ of $S(a)$ that are to the left of $\pi$. Since $\pi$ is composed of steps of the form $(1,0),(0,1)$, we get $r(\pi,i+1) \geqslant r(\pi,i)$. Since $a(i) \in [i-1]$, the map $F: D_{a+1} \to P_a$ defined by $F(\pi) = (r(\pi,a+1),r(\pi,a),\ldots,r(\pi,2))$ is well defined and injective.
		It can be similarly be proved that any path $\pi \in D_{a+1}$ can be uniquely determined from the vector $(0,p_a,p_{a-1},\ldots,p_{1})$, where $(p_1,p_2,\ldots,p_{a}) \in P_a$. Thus the map $F$ is bijective. Translating the bijection into generating function gives us the desired result.
	\end{proof}

	\begin{ex}
		The five Dyck paths of order $3$ in Figure \ref{fig:5d} correspond to the vectors $(0,0)$, $(0,1)$, $(0,2)$, $(1,1)$, $(1,2)$ under the map $F: D_3 \to P_2$. This implies that $p_2(0)=1,p_2(1)=1,p_2(2)=2,p_2(3)=1$. It can be calculated that $C_3(q)=1+2q+q^2+q^3$. Thus we have \[q^{\binom{3}{2}}\left(1+\frac{2}{q}+\frac{1}{q^2}+\frac{1}{q^3}\right)=q^3+2q^2+q+1,\]
		which is in accordance with the calculation above.
	\end{ex}
Moreover, Proposition \ref{Pa} implies that $|P_a|= C_{a+1}$. We are now ready to prove Theorem \ref{cn}.
	
	\subsection{Proof of Theorem \ref{cn}} We first prove that $n(\mathcal{F}_a, a^2-a) = C_{a+1}$. Let $I \in  \mathcal{A} (\mathcal{F}_{a}, a^2-a)$. From the definition of conductor, we get $\NN_{\geq a^2-a} \subseteq I$. On combining equation \eqref{+1} and the fact that $L(1,a)=a^2-a$, we get $r(I,i) \in [a-i]$ for $i \in 1+[a-2]$. Also for $I$ to be an ideal, the excluded labels must satisfy equation \eqref{r3}. This implies $r(I,i+1) \leq r(I,i)$. Thus $(r(I,1),\ldots,r(I,a-1)) \in P_{a-1}$. This implies that any ideal $I \in \mathcal{A} (\mathcal{F}_{a}, a^2-a)$ corresponds to a vector in $P_{a-1}$. For some vector $\textbf{p}= (p_1,\ldots,p_{a-1}) \in P_{a-1}$, let $p_a=0$ and let \[L(\textbf{p}) = \bigcup_{i \in 1+[a-1], j \in \NN_{\geq j+p_j}} L_{i,j}. \]
Clearly for any element $l \in L(p)$, the inclusion $l+a \in L(p)$ holds. Also since $p_{i} \geq p_{i+1}$, we get $l+a+1 \in L(p)$. Thus the set $L(p)$ is ideal. Since $p_1 \leq a-1$, we get $a^2-a \in L(P)$. Thus $c(L(p))= a^2-a$ and $L(p) \in \mathcal{A}(\mathcal{F}_{a}, a^2-a)$. This proves that vectors in $P_{a-1}$ are in one-to-one correspondence with ideals in $\mathcal{A}(\mathcal{F}_{a}, a^2-a)$. Proposition \ref{Pa} implies $|\mathcal{A} (\mathcal{F}_{a}, a^2-a)|=|P_{a-1}|= C_a$.

	Now, let $I \in \mathcal{A}(\mathcal{F}_a, a^2+(m-1)a )$ for some $m \in \NN_{\geqslant 1}$.  Notice that $L(1,a+m)=a^2+(m-1)a$ and $a^2+(m-1)a-1 = L_{a,a+m-1}$. From the definition of conductor we get $\NN_{\geq a^2+(m-1)a} \subseteq I$ and $L_{a,a+m-1} \notin I$. Equations \eqref{+1},\eqref{r3} imply that any for any ideal $I \in \mathcal{A}(\mathcal{F}_a, a^2+(m-1)a)$, the inclusion $r(I,i) \in m+[a-i]$ holds. However, the removed elements must also satisfy equation \eqref{r3}. Thus we get $r(I,i+1) \leq r(I,i)$. On subtracting $m$ from each entry of the vector $(r(I,1),r(I,2),\ldots,r(I,a-1),r(I,a))$, we conclude that any ideal $I \in \mathcal{A} (\mathcal{F}_{a}, a^2+(m-1)a)$ corresponds to a vector in $P_{a-1}$. It can be analogoulsy proved as before that any vector $p \in P_{a-1}$ corresponds to an ideal in $\mathcal{A} (\mathcal{F}_{k}, a^2+(m-1)a)$. Proposition \ref{Pa} implies $|\mathcal{A} (\mathcal{F}_{a}, a^2-a)|=|P_{a-1}|= C_a$. 
 
 \QED

  \begin{rem}
It can be observed that the number of ideals $I \in  \mathcal{F}_{a}$ such that $c(I)=a^2-a$ and $\operatorname{codim}(I)=k$ is 
\[[q^k]  \left(q^{\binom{a}{2}} C_{a} \left(\frac{1}{q}\right)\right), \]
where $[q^k] F(q)$ denotes the coefficient of $q^k$ in the Taylor series of $F(q)$ centered at $q=0$.
 \end{rem}

	\begin{figure}
		
		\begin{tikzpicture}
			
			\begin{scope}[xshift=-6.4cm]
				\scalebox{0.9}{
					\foreach \i in {1,...,3} {
						\foreach \j in {1,...,3} {
							\draw (\i, \j) rectangle (\i+1, \j+1);
						}
					}
					\draw [line width=0.7mm, black ] (1,1) -- (1,2);
					\draw [line width=0.7mm, black ] (1,2) -- (1,3);
					\draw [line width=0.7mm, black ] (1,3) -- (1,4);
					\draw [line width=0.7mm, black ] (1,4) -- (2,4);
					\draw [line width=0.7mm, black ] (2,4) -- (3,4);
					\draw [line width=0.7mm, black ] (3,4) -- (4,4);}
				\begin{scope}[xshift=3.5cm]
					\scalebox{0.9}{\foreach \i in {1,...,3} {
							\foreach \j in {1,...,3} {
								\draw (\i, \j) rectangle (\i+1, \j+1);
							}
						}
						\draw [line width=0.7mm, black ] (1,1) -- (1,2);
						\draw [line width=0.7mm, black ] (1,2) -- (1,3);
						\draw [line width=0.7mm, black ] (1,3) -- (2,3);
						\draw [line width=0.7mm, black ] (2,3) -- (2,4);
						\draw [line width=0.7mm, black ] (2,4) -- (3,4);
						\draw [line width=0.7mm, black ] (3,4) -- (4,4);}
				\end{scope}
				\begin{scope}[xshift=7cm]\scalebox{0.9}{
						\foreach \i in {1,...,3} {
							\foreach \j in {1,...,3} {
								\draw (\i, \j) rectangle (\i+1, \j+1);
							}
						}
						\draw [line width=0.7mm, black ] (1,1) -- (1,2);
						\draw [line width=0.7mm, black ] (1,2) -- (1,3);
						\draw [line width=0.7mm, black ] (1,3) -- (2,3);
						\draw [line width=0.7mm, black ] (2,3) -- (3,3);
						\draw [line width=0.7mm, black ] (3,3) -- (3,4);
						\draw [line width=0.7mm, black ] (3,4) -- (4,4);}
				\end{scope}
				\begin{scope}[xshift=10.5cm]
					\scalebox{0.9}{
						\foreach \i in {1,...,3} {
							\foreach \j in {1,...,3} {
								\draw (\i, \j) rectangle (\i+1, \j+1);
							}
						}
						\draw [line width=0.7mm, black ] (1,1) -- (1,2);
						\draw [line width=0.7mm, black ] (1,2) -- (2,2);
						\draw [line width=0.7mm, black ] (2,2) -- (2,3);
						\draw [line width=0.7mm, black ] (2,3) -- (2,4);
						\draw [line width=0.7mm, black ] (2,4) -- (3,4);
						\draw [line width=0.7mm, black ] (3,4) -- (4,4);}
				\end{scope}
				\begin{scope}[xshift=14cm]
					\scalebox{0.9}{\foreach \i in {1,...,3} {
							\foreach \j in {1,...,3} {
								\draw (\i, \j) rectangle (\i+1, \j+1);
							}
						}
						\draw [line width=0.7mm, black ] (1,1) -- (1,2);
						\draw [line width=0.7mm, black ] (1,2) -- (2,2);
						\draw [line width=0.7mm, black ] (2,2) -- (2,3);
						\draw [line width=0.7mm, black ] (2,3) -- (3,3);
						\draw [line width=0.7mm, black ] (3,3) -- (3,4);
						\draw [line width=0.7mm, black ] (3,4) -- (4,4);}
				\end{scope}
			\end{scope}
		\end{tikzpicture}
		\caption{Dyck paths of order three}
		\label{fig:5d}
	\end{figure}

	\section{Concluding Remarks and Further Research}\label{S5}
	
	Several paths have not been explored yet and those will be the subject of future work. We now highlight a connection with HOMFLY polynomials. Let $P(L)$ denote the HOMFLY polynomial of an oriented link $L \in S^3$. It is known that $P(L)$ is an element of $\ZZ[a^{\pm 1}, (q+q^{-1})^{\pm 1}]$. Let $T_{k,n}$ denote the $(k,n)$ Torus knot. Given a pair of coprime $(k,n)$ and the curve $y^k=x^n$, we know that $T_{k,n}$ is the link of its singularity at the origin. Oblomkov and Shende prove the following \cite{oblomkov2018hilbert}.
	\begin{thm}\label{oblom}
		Let $C$ be the curve cut out by $y^k=x^n$ and let $p$ be the origin. Then, we have 
		\begin{equation}\label{integral}
			P\left(T_{k,n}\right) = \left(\frac{a}{q}\right)^{(k-1)(n-1)} (1-q^2) \int_{C_{p}^{[*]}} q^{2l} \left(1-a^2\right)^{m-1} d\chi.  
		\end{equation}
	\end{thm}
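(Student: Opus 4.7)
The plan is to establish \eqref{integral} by combining $\CC^*$-localization on the Hilbert-scheme side with a Hecke-algebra formula for the HOMFLY polynomial of $T_{k,n}$ on the knot-theory side, and matching the resulting generating functions. The $\CC^*$-action on $\AA^2$ defined by $\lambda \cdot (x,y) = (\lambda^n x, \lambda^k y)$ preserves the curve $C : y^k = x^n$ and induces an action on the punctual Hilbert scheme $C_p^{[*]}$. Both the colength $l$ and the minimal number of generators $m$ are constant on $\CC^*$-orbits, and every orbit retracts onto a fixed point under the flow. By additivity of Euler characteristic over the Bia\l ynicki--Birula cells, the integral reduces to a sum over the fixed points, which are exactly the monomial ideals of the completed local ring $\mcO_{C,p}^{\wedge} \cong k[[t^k,t^n]]$:
\[\int_{C_p^{[*]}} q^{2l}(1-a^2)^{m-1}\, d\chi \;=\; \sum_{J} q^{2 l(J)} (1-a^2)^{m(J)-1},\]
where $J$ ranges over the ideals of the numerical semigroup $\la k,n \ra$.

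The next step is to rewrite this right-hand side as a lattice-path generating function. Using the tabular representation of section $3$ and proposition \ref{rcat}, each ideal $J$ corresponds to a path $P \in P(k,n)$ with $l(J) = \operatorname{area}(P)$, while $m(J)$ equals the number of ``inner corners'' of $P$ (transitions from a horizontal step to a vertical step), since these corners record the positions of the minimal monomial generators. The sum thereby becomes a two-variable refinement of the $q$-binomial $\binom{k+n-1}{k-1}_{q}$ from section $3$, and the prefactor $(a/q)^{(k-1)(n-1)}(1-q^2)$ serves simultaneously to absorb the framing correction of the torus knot and to clear the $(1-q^a)$-type denominator coming from the infinite tail of ideals of large colength (whose contributions are controlled by proposition \ref{2.4}).

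For the HOMFLY side, I would invoke the Rosso--Jones formula expressing $P(T_{k,n})$ as a weighted sum over partitions of quantum dimensions, or equivalently the Ocneanu trace of the $(k,n)$ torus braid in the Hecke algebra of type $A$. The main obstacle, which is the combinatorial heart of the Oblomkov--Shende argument, is to identify this partition-indexed Hecke sum with the path-indexed sum over ideals of $\la k,n \ra$; this match is supplied by the rational shuffle theorem and the Gorsky--Negu\c t--Mellit refined $(k,n)$ rational $(q,t)$-Catalan identities, which furnish precisely the bijection exchanging the statistics $(\operatorname{area},\text{generators})$ on the Hilbert-scheme side with $(\operatorname{arm},\operatorname{leg})$ on the Hecke side. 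Once this identification is in place, both sides reduce to the same element of $\ZZ[a^{\pm 1},(q+q^{-1})^{\pm 1}]$, establishing \eqref{integral}.
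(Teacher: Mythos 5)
The paper does not prove Theorem \ref{oblom} at all: it is quoted verbatim from Oblomkov and Shende \cite{oblomkov2018hilbert}, so there is no internal proof to compare your attempt against. Judged on its own terms, your proposal is a road map rather than a proof. The first step is sound and is indeed how one passes from \eqref{integral} to \eqref{eqn}: the $\CC^*$-action $\lambda\cdot(x,y)=(\lambda^{n}x,\lambda^{k}y)$ preserves $C$, the colength and number of generators are constant on orbits, and additivity of $\chi$ over the resulting cell decomposition localizes the integral to the fixed points, which are exactly the monomial ideals, i.e.\ the ideals of the semigroup $\la k,n\ra$; the identification of $m(J)$ with the number of inner corners of the associated staircase path is also correct.

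Everything after that, however, is deferred rather than done. You invoke the Rosso--Jones formula for $P(T_{k,n})$ and then assert that the identification of the partition-indexed Hecke-algebra sum with the path-indexed sum over semigroup ideals is ``supplied by the rational shuffle theorem and the Gorsky--Negu\c t--Mellit identities.'' That identification \emph{is} the theorem; naming a result that implies it is not a proof, and the specific results you cite concern the $(q,t)$-refined superpolynomial, postdate the Oblomkov--Shende computation, and in part take it as input, so the argument as written risks circularity. Likewise, the claim that the prefactor $\left(a/q\right)^{(k-1)(n-1)}(1-q^{2})$ simultaneously ``absorbs the framing correction'' and ``clears the $(1-q^{a})$-type denominator'' is asserted, not verified, and Proposition \ref{2.4} of this paper plays no role in that bookkeeping. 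A complete proof must actually carry out the comparison of the two generating functions (this is the combinatorial core of \cite{oblomkov2018hilbert}); your proposal correctly identifies the two objects to be compared but does not compare them.
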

	Note that the complete local ring associated with the curve $y^k=x^n$ is $\mathcal{O}=\CC[[t^k,t^n]]$. Let $J(k,n)$ denote the set of all monomial ideals in $\CC[[t^k,t^n]]$. The integral in equation \eqref{integral} can alternatively be written as \cite{oblomkov2018hilbert}
	\begin{equation}\label{eqn}
		\int_{C_{p}^{[*]}} q^{2l} (1-a^2)^{m-1} d\chi = \sum_{J\in J(k,n)} q^{2 \dim_{\CC} (\mathcal{O}/J)} (1-a^2)^{m(J)}.
	\end{equation}
	From equations \eqref{firsteqn}, \eqref{eqn}, and Theorem \ref{oblom} we have 
	\begin{equation}\label{Hom}
		P\left(T_{k,n}\right) \mid_{a=0} \,= I\left(k,n;q^2\right).
	\end{equation}
	We wonder whether such relations exist for the ideal generating function of numerical semigroups with more than two generators. Another possible future direction is to study the stationary terms of the sequences $m(S,k)$ and $n(S,k)$. As noted before, $m\left(\la a,b \ra, k\right) = c_{a,b}$ for all $k \geqslant ab-a-b$ are the rational Catalan numbers and $n(\mathcal{F}_a,a^2-a)$ are the Catalan numbers which have been widely studied and have a variety of combinatorial interpretations. Thus, it will be interesting from a combinatorial point of view to study the stationary terms of $m\left(S,k\right),n(S,k)$ for numerical semigroups with more than two generators. Moreover, we have the equality $m(\mathcal{F}_a, a^2-a) = n(\mathcal{F}_{a}, a^2-a)$. This suggests the existence of bijection between the sets $\mathcal{A}(\mathcal{F}_a, a^2-a)$ and $\{I \in I(\mathcal{F}_a) \mid \co(I)=a^2-a\}$. We conclude this paper by inviting the interested reader to find the bijection. 

 \section{Acknowledgements}
	This work was partially done during the Research Science Institute (RSI) at MIT in the summer of 2022. I would like to thank my mentor Jeffery Yu for his guidance throughout the duration of the program, and Minh-Tam Quang Trinh for proposing the project. I am grateful to the RSI, CEE \& MIT for their hospitality and support during the preparation of this work.

\end{document}